\documentclass[reqno]{amsart}
\usepackage{amsmath,amssymb,cite,color}
\usepackage[mathscr]{euscript}
\usepackage[english]{babel}
\usepackage{orcidlink}
\theoremstyle{plain}
\newtheorem{theorem}{Theorem}[section]

\newtheorem{lemma}[theorem]{Lemma}

\theoremstyle{definition}

\theoremstyle{remark}

\numberwithin{equation}{section}
\numberwithin{theorem}{section}

\newcommand{\mc}[1]{{\mathcal #1}}

\newcommand{\bb}[1]{{\mathbb #1}}

\newcommand{\rme}{\mathrm{e}}

\newcommand{\rmd}{\mathrm{d}}

\def\const{{\rm const.}\,}

\newcommand{\supp}{\mathop{\rm supp}\nolimits}

\title[Vorticity confinement in an infinite cylinder]{Vorticity confinement for 2D incompressible flows in an infinite cylinder}

\author[P.\ Butt\`a]{Paolo Butt\`a \orcidlink{0000-0002-3193-4282}}
\address{Dipartimento di Matematica\\
Sapienza Universit\`a di Roma\\
P.le Aldo Moro 5, 00185 Roma\\
Italy}

\email{butta@mat.uniroma1.it}

\author[G.\ Cavallaro]{Guido Cavallaro \orcidlink{0000-0001-5668-370X}}
\address{Dipartimento di Matematica\\
Sapienza Universit\`a di Roma\\
P.le Aldo Moro 5, 00185 Roma\\
Italy}

\email{cavallar@mat.uniroma1.it}

\keywords{Vorticity confinement; Navier-Stokes equations; Euler equations; Infinite cylinder; Long-time behavior; Decay estimates.}

\subjclass{
Primary 	76D17;  %Viscous vortex flows          
76B47; %Vortex flows for incompressible inviscid fluids
Secondary 37N10.  %Dynamical systems in  fluid mechanics, oceanography and meteorology
}

\date{}

\begin{document}

\begin{abstract}
We study the confinement of vorticity for two-dimensional incompressible flows in an infinite cylinder. For Navier-Stokes solutions with non-negative and compactly supported initial vorticity, we derive quantitative decay estimates showing that the vorticity mass outside regions whose distance from the initial support grows like $\sqrt{t\log^\alpha t}$ (with $\alpha>1$) or like $t^\beta$ (with $\beta>1/2$) becomes, respectively, super-polynomially or stretched-exponentially small. The analysis combines an iterative scheme with an antisymmetry property of the Biot-Savart kernel. In the Euler case, by coupling this approach with a first-moment estimate from \cite{CD19}, we recover the confinement bound of \cite{CD19} and refine it slightly: the diameter of the vorticity support grows at most like $(t\log t)^{1/3}$, rather than $t^{1/3}\log^2 t$. %%(Commun. Math. Phys., 367, 1077--1093, 2019)
\end{abstract}

\maketitle

\section{Introduction and main result}
\label{sec:1}

We study a two-dimensional incompressible fluid of unit density on the infinite cylinder $\bb R \times \bb T$, where $\bb T$ denotes the circle of unit radius. This is equivalent to describing the fluid on the strip
\begin{equation}
\label{strip}
S := \left\{ x = (x_1, x_2) \in \bb R^2 \colon x_1 \in \bb R, , x_2 \in [0, 2\pi] \right\},
\end{equation}
with periodic boundary conditions in the $x_2$-direction, that is, motions in $\bb R^2$ whose velocity field $u = (u_1, u_2)$ is $2\pi$-periodic in $x_2$. 

Consider first the case of viscous fluids. In this setting, we work with motions $u \in C^0([0,+\infty); \mc C_\mathrm{u}(S))$, where $\mc C_\mathrm{u}(S)$ denotes the space of bounded, uniformly continuous functions $v \colon S \to \bb R^2$ satisfying the periodicity condition $v(x_1,0) = v(x_1,2\pi)$. Existence and uniqueness of mild solutions to the incompressible Navier–Stokes equations in this class are known to hold for divergence-free initial data $u_0 \in \mc C(S)$ (see, for instance, \cite{Ga15} and references therein).

Denoting by $\omega = \partial_1 u_2 - \partial_2 u_1$ the vorticity, as explained, e.g., in \cite[Sec.~2]{Ga15}, if we decompose
\begin{equation}
\label{vel_field}
u(x,t) = m(x_1,t) + \hat u(x,t) \quad \text{ where } m(x_1,t) = \int_0^{2\pi}\!\rmd x_2\, u(x,t),
\end{equation}
then $m(x_1,t) = (m_1, m_2(x_1,t))$, where the constant $m_1$ can be set to zero through an appropriate Galilean transformation. Moreover, $\partial_1 m_2(x_1,t) = \int_0^{2\pi}\!\rmd x_2\, \omega(x,t)$, and the oscillatory component $\hat u(x,t)$ can be expressed in terms of the vorticity via the cylindrical Biot–Savart law,
\begin{equation}
\label{hatu}
\hat u(x,t) = \int_S\! \rmd y\, \nabla_x^\perp G(x,y)\,[\omega(y,t)-\partial_1 m_2(y_1,t)],
\end{equation}
where $\nabla^\perp := (\partial_2, -\partial_1)$ and
\begin{equation}
\label{G}
G(x, y) = -\frac12 \log \left[ \cosh(x_1 - y_1) - \cos(x_2 - y_2) \right]
\end{equation}
is the Green function for the Poisson equation on the domain $S$ with periodic boundary conditions. We note that the component $m_2(x_1,t)$, which cannot be fully recovered from the vorticity, will play no role in our analysis, since only the first component of the velocity field is relevant in the proof.

The vorticity evolution is governed by the equation
\begin{gather}
\label{eq:omega}
\begin{cases}
\partial_t\omega (x,t) + (u\cdot\nabla)\omega(x,t) = \nu\Delta \omega, \\
\omega(x,0) = \omega_0(x),
\end{cases}
\end{gather}
where $\omega_0 = \partial_1 u^0_2 - \partial_2 u^0_1$ and $\nu$ is the viscosity. For later purposes, we recall here the following weak formulation of Eq.~\eqref{eq:omega}. In view of the incompressibility condition $\nabla \cdot u(x,t)=0$, given any smooth function $f(x,t)$ with compact support in $S\times [0,+\infty)$, if
\begin{equation}
\label{Aver}
\omega_t[f] := \int_S\! \rmd x\,\omega (x,t) f(x,t) 
\end{equation}
then $t\mapsto \omega_t[f]$ solves
\begin{equation}
\label{Weakeq}
\frac{\rmd}{\rmd t}\omega_t[f] = \omega_t [u \cdot \nabla f] + \omega_t[\partial_t f] + \nu\,\omega_t[\Delta f].
\end{equation}

We aim to study the spreading of an initial bounded, non‑negative vorticity with compact support. Although the vorticity instantaneously diffuses throughout the entire strip, one can still quantify the extent of the region in which the bulk of the vorticity remains concentrated and derive upper bounds on its growth in time. Specifically, we will show that, for any sufficiently large positive time, the integral of the vorticity over the region $|x_1| > \sqrt{t \log^\alpha t}$, for any $\alpha > 1$, is bounded above by $t^{-\ell}$ for every $\ell > 0$; whereas in the region $|x_1| > t^\beta$, for any $\beta > 1/2$, this integral is smaller than $\exp(-t^\delta)$ for every $0 < \delta < 2\beta - 1$. These results are stated precisely in Theorem~\ref{teo:mt} below and are analogous to those obtained in the whole plane; see \cite[Sec.~4]{Mar94}.

It is natural to compare this problem with the corresponding one for the Euler equations, in which the vorticity is simply transported  ($\nu=0$). When the domain is the whole plane and the initial vorticity has compact support, one can investigate how the diameter of the support grows in time. The conservation laws of total vorticity mass, center of vorticity, and moment of inertia, defined respectively by
\[
m := \int\!\rmd x\, \omega , \quad B := \frac 1m \int\!\rmd x\, x\, \omega, \quad I = \int\!\rmd x\,  |x-B|^2 \omega,
\]
allow one to show that the diameter of the support does not exceed $\const (t \log t)^{1/4}$ \cite{ISG} (the weaker estimate $\const t^{1/3}$ was previously proved in \cite{Mar94}). More recently \cite{CD19}, this question has been addressed for the two-dimensional Euler equations in an infinite cylinder, which is precisely the domain considered in the present paper. The authors take a non‑negative initial vorticity compactly supported in the cylinder and show that for $t \ge 1$ the diameter of the support is bounded above by $\const t^{1/3}\log^2 t$. In this setting, the total vorticity mass and the horizontal component of the center of vorticity are conserved\footnote{It is worth emphasizing that these two quantities retain their conserved character in the Navier–Stokes flow($\nu \ne 0$) discussed above.}, and a key tool in the proof is the uniform bound
\begin{equation}
\label{boundx1}
\sup_{t\ge 0} \int_S\!\rmd x\,  |x_1| \, \omega(x,t) < \infty,
\end{equation}
which follows from the conservation of energy. This estimate provides a concentration property that replaces the conservation of the moment of inertia, which in this geometry does not hold.

The case of an infinite strip with slip boundary conditions (for Euler) and no‑slip boundary conditions (for Navier–Stokes) is very challenging, due to the lack of useful conserved quantities and to the absence of the antisymmetry of $\nabla_x^\perp G(x,y)$ (as can be seen, for instance, in \cite{CapMar86, MaP94}). In fact, in the non‑viscous case, only the total vorticity mass and the energy are conserved in time. Moreover, compared with the case of the cylinder, since the Green function is uniformly bounded for $|x_1 - y_1| \ge 1$ (contrary to what happens in \cite{CD19}), the conservation of energy does not imply an estimate like Eq.~\eqref{boundx1}. Therefore, one expects that the mere assumption of non‑negative and compactly supported initial vorticity leads, in general, to the trivial upper bound $\const t$ for the diameter of the support  (which is an immediate consequence of the boundedness of the velocity field).

We also recall that vorticity confinement in the exterior of a smooth bounded domain was studied in \cite{ILL07, Mar96}, where one generally obtains a bound on the growth of the support’s diameter of order $\sqrt t$. This faster growth reflects the absence of conserved quantities that would otherwise yield a slower expansion.

It is worth emphasizing that in the Navier–Stokes setting---both in the whole plane and in the cylinder considered here---the moment of inertia is not conserved. As a consequence, obtaining bounds on the region containing almost all of the vorticity mass requires different techniques, such as estimates on higher-order moments (as in \cite{Mar94}) or an iterative method---already used in other contexts (see, for instance, \cite{BuM1, butmar2, BCM, butcavmar})---combined with the antisymmetry of $\nabla_x^\perp G(x,y)$, as employed in the present paper. We also mention \cite{Ga14, Ga15}, where the authors study the two-dimensional Navier–Stokes equations in an infinite cylinder and establish decay properties for the $L^\infty$-norms of both the velocity and vorticity fields.

Incidentally, by incorporating the estimate Eq.~\eqref{boundx1} into our approach, we obtain a slight refinement of the confinement result in the Euler case, improving upon that obtained in \cite{CD19}. In particular, we show that the size of the support grows like $(t \log t)^{1/3}$ (instead of $t^{1/3}\log^2 t$).

The plan of the paper is the following. In Section~\ref{sec:2} we present the results concerning the Navier–Stokes equations in the cylinder, while in Section~\ref{sec:3} we establish the above refinement of the confinement result for the Euler case.

\textit{A warning on the notation}. Throughout the paper, we use $C$ to denote a generic positive constant independent of time. Its numerical value may change from line to line, and it may depend on the initial data.

\section{Navier-Stokes flow}
\label{sec:2}

We assume initial data such that $\omega_0 = \partial_1 u^0_2 - \partial_2 u^0_1$ is non‑negative, bounded, and has compact support; that is, for some suitable $0 < \bar C < \infty$,
\begin{equation}
\label{Lamb0}
\Lambda_0 = \supp \omega_0 \subset \{y\in S \colon |y_1|\le  \bar C\}.
\end{equation}
By the parabolic maximum principle and mass conservation applied to Eq.~\eqref{eq:omega}, we have, for any $(x,t) \in S \times \bb R_+$,
\begin{equation}
\label{in}
0 \le \omega (x,t) \le  \|\omega_0\|_\infty, \quad  \int_S\!\rmd x\, \omega(x,t) = M_0 := \int\!\rmd x\, \omega_0(x).
\end{equation}

Defining
\begin{equation}
\label{mt}
m_t (h) = \int_{|y_1|>h}\!\rmd y\, \omega(y,t),
\end{equation}
we study the behavior of the function $m_t(h)$ for large $t$ and for $h$ suitably diverging with $t$, as established in the following theorem.

\begin{theorem}
\label{teo:mt}
Assume $\omega_0$ satisfies the assumptions stipulated above and let $m_t$ be as in Eq.~\eqref{mt}. Then:
\begin{itemize}
\item[\textit{(a)}] For each $\ell>0$ and $\alpha>1$ there exists $\mc T_{\alpha, \ell}$ such that
\begin{equation}
\label{smt}
m_t \big(\sqrt{t \log^\alpha t}\big)  \le \frac{1}{t^\ell} \quad \forall\, t\ge \mc T_{\alpha,\ell}.
\end{equation}
\item[\textit{(b)}] For any $\beta>1/2$ and $0<\delta<2\beta-1$, there exists $\mc T_{\beta, \delta}$ such that
\begin{equation}
\label{smt2}
m_t \big( t^\beta)  \le \rme^{- t^\delta} \quad \forall\, t\ge \mc T_{\beta,\delta}.
\end{equation}
\end{itemize}
\end{theorem}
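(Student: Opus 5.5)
We work with a smoothed version of $m_t(h)$ and bound it through the weak formulation \eqref{Weakeq}, in the spirit of the iterative method of \cite{BuM1, butmar2}. Fix $h>0$ and a small $\epsilon\in(0,h)$. Let $W_{h,\epsilon}(x_1)$ be a smooth non-negative function of $x_1$ alone that equals $0$ for $|x_1|\le h$ and $1$ for $|x_1|\ge h+\epsilon$. We choose it with $|W'|\le C/\epsilon$, $|W''|\le C/\epsilon^2$, and with $W'$ odd and concentrated in the shells $h\le |x_1|\le h+\epsilon$. Set $\mu_t(h)=\omega_t[W_{h,\epsilon}]$, so that
\[
m_t(h+\epsilon)\le \mu_t(h)\le m_t(h).
\]
Truncating $W$ at large $|x_1|$ is harmless by \eqref{in}. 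Since $W$ depends only on $x_1$, \eqref{Weakeq} gives
\[
\frac{\rmd}{\rmd t}\mu_t(h)=\int\!\rmd x\, u_1(x,t)\,W'(x_1)\,\omega(x,t)+\nu\int\!\rmd x\, W''(x_1)\,\omega(x,t).
\]
Here $u_1=\hat u_1$, because $m_1=0$ and the mean part $m_2$ has no first component; this is why $m_2$ plays no role.

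Using \eqref{hatu}, $\hat u_1=\int \rmd y\,\partial_{x_2}G(x,y)\,[\omega(y)-\partial_1 m_2(y_1)]$. The subtracted term integrates to zero against $\partial_{x_2}G$ over $y_2$, by periodicity. So
\[
\int \rmd x\, u_1 W'\omega=\iint \rmd x\,\rmd y\,\omega(x)\omega(y)\,W'(x_1)\,\partial_{x_2}G(x,y).
\]
We symmetrize in $x\leftrightarrow y$. Since $\partial_{x_2}G(x,y)=-\partial_{y_2}G(x,y)$ is odd under exchange of $x$ and $y$, this becomes
\[
\tfrac12\iint \omega(x)\omega(y)\,[W'(x_1)-W'(y_1)]\,\partial_{x_2}G(x,y).
\]
We split the integral into two regions.

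\emph{Near region}, $|x_1-y_1|\le 1$. Here we use $|W'(x_1)-W'(y_1)|\le C\epsilon^{-2}|x_1-y_1|$ together with $|\partial_{x_2}G|\le C/|x-y|$, so the singularity cancels. At least one of $x,y$ must lie in $\{|x_1|\ge h-1\}$, so this part is bounded by $C\epsilon^{-2}\|\omega\|_\infty\, M_0\, m_t(h-1)$.

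\emph{Far region}, $|x_1-y_1|>1$. Here $\partial_{x_2}G$ decays like $e^{-|x_1-y_1|}$. We use that $W'$ is supported in the shell: either both points lie in shells, giving $C\epsilon^{-1}m_t(h)^2$, or one point lies in the shell and the other is at distance $\ge h/2$. In the latter case the factor $e^{-h/2}$ is negligible, and we bound the term by $C\epsilon^{-1}M_0\,e^{-h/2}$ plus $C\epsilon^{-1}m_t(h/2)\,m_t(h)$.

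The viscous term is bounded by $C\nu\,\epsilon^{-2}\,m_t(h)$. Combining, we get
\[
\mu_t(h)\le \mu_0(h)+C\int_0^t\!\rmd s\,\Big[\epsilon^{-2}m_s(h-1)+\epsilon^{-1}e^{-h/2}\Big],
\]
and $\mu_0(h)=0$ for $h>\bar C+1$.

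We expect the main obstacle to be that this inequality loses a factor $t/\epsilon^2$ at each step while moving $h$ by only about $\epsilon+1$. This is too crude: it would give only linear growth. To reach the diffusive scale $\sqrt{t}$ one must instead exploit that the viscous term, acting on a profile of width $\epsilon$, contributes $\nu t/\epsilon^2$, and choose $\epsilon_n\sim\sqrt{t}$ with $t\,\epsilon^{-2}\le 1/2$ at each iteration.

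We would therefore redo the near-region estimate more carefully. We bound $[W'(x_1)-W'(y_1)]\,\partial_{x_2}G$ by $C\epsilon^{-2}$, since $|x_1-y_1|\,|\partial_{x_2}G|$ is bounded, and keep the factor $\|\omega\|_\infty\le C$ from \eqref{in}. The near-region term is then $\le C\epsilon^{-2}m_s(h-1)$, the same order as the viscous term. With $h_{n+1}=h_n-\epsilon-1$, this yields the recursion
\[
m_t(h_{n})\le \frac{Ct}{\epsilon^2}\,m_t(h_{n+1})+Ct\,\epsilon^{-1}e^{-h_n/2}.
\]
We iterate it $n$ times, starting from $h_0=h$ and using $m\le M_0$ at the end. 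With $\epsilon=\sqrt{Ct}\,\lambda$ this gives a bound of order $\lambda^{-2n}$, provided $n(\epsilon+1)\le h-\bar C-1$. Taking $h=\sqrt{t\log^\alpha t}$, we may choose $\lambda\sim\log^{\gamma}t$ and $n\sim\log^{\alpha/2-\gamma}t$ with $0<\gamma<\alpha/2$. Then $\lambda^{-2n}=\exp(-2\gamma\, n\log\log t)$, which beats every power $t^{-\ell}$ provided $\alpha/2-\gamma\ge 1$ up to the $\log\log$ gain. Since $\alpha>1$ permits $\gamma$ small with $\alpha/2-\gamma>1/2$, we adjust so that $n\log\lambda\gg\log t$, which is why $\alpha>1$ appears. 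This proves (a). For (b), take $h=t^\beta$, $\lambda$ constant and large, and $n\sim t^{\beta-1/2}$. This gives $e^{-c\,t^{\beta-1/2}}$; optimizing instead with $\lambda\sim t^{\beta-1/2-\delta/2}$ and $n\sim t^{\delta}$ gives $\exp(-t^\delta)$ for any $\delta<2\beta-1$. The exponentially small source terms $e^{-h_n/2}$ are harmless throughout.
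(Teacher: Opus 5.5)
\textbf{Verdict: the proposal has a genuine gap.} The iteration step discards the factor $1/n!$ that the argument needs, and as a result it cannot reach the stated rates.

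\textbf{Main gap: the iteration.} From $\mu_t(h)\le\mu_0(h)+C\epsilon^{-2}\int_0^t m_s(h-1)\,\rmd s$ you pass to the fixed-time recursion $m_t(h_n)\le (Ct/\epsilon^2)\,m_t(h_{n+1})+\dots$.

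\begin{itemize}
\item As written this is not valid, because $s\mapsto m_s(h)$ need not be monotone.
\item It can be repaired with $\sup_{s\le t}m_s$. But then iterating gives only $(Ct/\epsilon^2)^n=\lambda^{-2n}$.
\item The constraint $n\epsilon\le h$ means $n\lambda\lesssim h/\sqrt t$. Since $\log\lambda\le\lambda/\rme$, every admissible choice satisfies $\lambda^{-2n}\ge\rme^{-2n\lambda/\rme}\ge\rme^{-c\,h/\sqrt t}$.
\item For $h=\sqrt{t\log^\alpha t}$ this is $\rme^{-c\log^{\alpha/2}t}$. That beats every power only when $\alpha>2$; for $\alpha<2$ it is not even polynomially small.
\item For $h=t^\beta$ it is $\rme^{-c\,t^{\beta-1/2}}$, so you only get $\delta<\beta-1/2$.
\end{itemize}

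Your parameter choices hide this. In (b), $\lambda\sim t^{\beta-1/2-\delta/2}$ and $n\sim t^\delta$ give $n\lambda\sim t^{\beta-1/2+\delta/2}$, which violates $n(\epsilon+1)\le h$. In (a), $\alpha/2-\gamma>1/2$ does not give the $\alpha/2-\gamma\ge 1$ you need.

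\textbf{The missing ingredient} is the factor $1/n!$ produced by the nested time integrals. Iterate the integral inequality $\mu_t(R,h)\le\mu_0(R,h)+\frac{C}{h^2}\int_0^t\mu_s(R-h,h)\,\rmd s$ itself, $n$ times, with a common step $h=R_0/(2n)$. The remainder is then
\[
\frac{1}{(n-1)!}\Big(\frac{C}{h^2}\Big)^n\int_0^t(t-s)^{n-1}\mu_s(R_n,h)\,\rmd s\;\le\;\frac{M_0}{n!}\Big(\frac{Ct}{h^2}\Big)^n\;\le\; M_0\Big(\frac{4C\rme\,t\,n}{R_0^2}\Big)^n .
\]
\begin{itemize}
\item Optimizing over $n$ gives a Gaussian-type bound $\rme^{-cR_0^2/t}$, instead of $\rme^{-cR_0/\sqrt t}$.
\item Concretely, $n=\lfloor\log t\rfloor$ gives $(C\log^{1-\alpha}t)^{\log t}$ for every $\alpha>1$.
\item And $n=\lfloor t^\delta\rfloor$ gives $(C\,t^{1+\delta-2\beta})^{t^\delta}$ for every $\delta<2\beta-1$.
\end{itemize}
In words: with the $1/n!$, each step only needs a width of order $\sqrt{t/n}$ rather than $\sqrt t$.

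\textbf{Second, easily fixed, problem: the far-region estimate.} There you bound $|W'(x_1)-W'(y_1)|\le 2C/\epsilon$. This produces terms of order $\epsilon^{-1}m_t(h/2)\,m_t(h)$ and $\epsilon^{-1}m_t(h)^2$, which you then drop from the combined inequality.
\begin{itemize}
\item Since $\epsilon\sim\sqrt t\gg1$, these terms are not dominated by $\epsilon^{-2}m_t(h-1)$.
\item With only $m_t(h/2)\le M_0$ available, they would cost a factor $t/\epsilon$ per step, i.e.\ the ballistic scale.
\end{itemize}
No near/far splitting is needed. Use the Lipschitz bound $|W'(x_1)-W'(y_1)|\le C\epsilon^{-2}|x_1-y_1|$ on the whole strip. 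By \eqref{boundG}, $|x_1-y_1|\,|\partial_{x_2}G(x,y)|\le C_1\rme^{-C_2|x-y|}$, which is integrable in $y$. Combine this with $\omega\le\|\omega_0\|_\infty$ and with the fact that the symmetrized integrand vanishes unless $|x_1|\ge h$ or $|y_1|\ge h$. This gives at once $\frac{\rmd}{\rmd t}\mu_t\le C\epsilon^{-2}m_t(h)$, viscous term included, which is the paper's inequality \eqref{eq:mut}.
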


\begin{proof}
Given $R\ge 2h>0$, let $W_{R,h} \colon \bb R \to [0,1]$ be a smooth function such that
\begin{equation}
\label{W0}
W_{R,h}(\xi) = \begin{cases} 1 & \text{if $|\xi|\le R$}, \\ 0 & \text{if $|\xi|\ge R+h$}, \end{cases}
\end{equation}
whose second derivative satisfies, for some positive constant $C_W$,
\begin{equation}
\label{W2}
|W_{R,h}''(\xi)| < \frac{C_W}{h^2}.
\end{equation}
In particular, for any $\xi, \eta\in \bb R$,
\begin{equation}
\label{W1}
|W_{R,h}'(\xi)-W_{R,h}'(\eta)| \le \frac{C_W}{h^2} |\xi-\eta|.
\end{equation}
Recalling that $x_1$ denotes the first component of $x = (x_1, x_2)$, we introduce the mollified version of $m_t$,
\begin{equation}
\label{mass 1}
\mu_t(R,h) = \int\! \rmd x \, \big[1-W_{R,h}(x_1)\big]\, \omega(x,t),
\end{equation}
which satisfies
\begin{equation}
\label{2mass 3}
\mu_t(R,h) \le m_t(R) \le \mu_t(R-h,h).
\end{equation}
Hence, it suffices to prove Eqs.~\eqref{smt} and \eqref{smt2} with $\mu_t$ in place of $m_t$. 

Since the function $t \mapsto \mu_t(R,h)$ is differentiable, we can compute its time derivative by means of Eq.~\eqref{Weakeq} with the test function $f(x,t) = 1 - W_{R,h}(x_1)$, obtaining
\[
\begin{split}
\frac{\rmd}{\rmd t} \mu_t(R,h) = & - \int_S\! \rmd x \, \nabla W_{R,h}(x_1) \cdot u(x,t) \, \omega(x,t) -\nu \int_S\! \rmd x \, \Delta W_{R,h}(x_1)  \, \omega(x,t) \\ 
=  & - \int_S\! \rmd x \, W_{R,h}'(x_1)  \,  u_1(x,t) \, \omega(x,t) -\nu \int_S\! \rmd x \,  W_{R,h}''(x_1)  \, \omega(x,t).
\end{split}
\]
From Eq.~\eqref{G}, we compute
\begin{equation}
\label{G_2}
\frac{\partial G}{\partial x_2}(x,y) = \frac{-\sin(x_2-y_2)}{2\left(\cosh(x_1-y_1)-\cos(x_2-y_2)\right)},
\end{equation}
which implies that $\int_{0}^{2\pi}\!\rmd y_2\, \frac{\partial G}{\partial x_2}(x,y) = 0$ for any $x \in S$ and $y_1 \in \bb R$. Therefore, using Eqs.~\eqref{vel_field} (with $m(x,t) = (0, m_2(x_1,t))$) and \eqref{hatu},
\[
u_1(x,t) = \int\!\rmd x\, \frac{\partial G}{\partial x_2}(x,y) \omega(y,t), 
\]
so that
\[
\begin{split}
\frac{\rmd}{\rmd t} \mu_t(R,h) & =  - \int_S \! \rmd x \int_S \! \rmd y\, W_{R,h}'(x_1) \frac{\partial G}{\partial x_2}(x,y)\,\omega(x,t)\, \omega(y,t) \\ & \quad -\nu \int_S\! \rmd x \,  W_{R,h}''(x_1)  \, \omega(x,t),
\end{split}
\]
Using now the antisymmetry
\begin{equation}
\label{antisim}
\frac{\partial G}{\partial x_2}(x,y) = -\frac{\partial G}{\partial x_2}(y,x),
\end{equation}
the time derivative can be written in the form
\[
\frac{\rmd}{\rmd t} \mu_t(R,h) = \int_S \! \rmd x\! \int_S \! \rmd y\, H(x,y,t) - \nu \int_S\! \rmd x \,  W_{R,h}''(x_1)  \, \omega(x,t),
\]
where
\begin{equation}
\label{H}
H(x,y,t) = - \frac 12 [W_{R,h}'(x_1)-W_{R,h}'(y_1)]\frac{\partial G}{\partial x_2}(x,y)\,\omega(x,t)\, \omega(y,t).
\end{equation}
We observe that $H(x,y,t)$ is symmetric under the exchange of $x$ and $y$ and that, by \eqref{W0}, a necessary condition for it to be nonzero is that either $|x_1|\ge R$ or $|y_1|\ge R$.  Moreover, again by Eq.~\eqref{W0}, the domain of integration in the viscous contribution to the time derivative can be restricted to the region $|x_1| \ge R$. Therefore,
\begin{align}
\label{d_tmu}
\frac{\rmd}{\rmd t} \mu_t(R,h) &= \bigg[ \int_{|x_1| \ge R}\!\rmd x\! \int_S\!\rmd y + \int_S\!\rmd x \! \int_{|y_1| > R}\!\rmd y -  \int_{|x_1| \ge R}\!\rmd x \! \int_{|y_1| > R}\!\rmd y\bigg]H(x,y,t) \nonumber \\ & \quad - \nu \int_{|x_1| \ge R}\! \rmd x \,  W_{R,h}''(x_1)  \, \omega(x,t) \nonumber  \\ & = 2 \int_{|x_1| \ge R}\!\rmd x \! \int_S\!\rmd y\,H(x,y,t)  -  \int_{|x_1| \ge R}\!\rmd x \! \int_{|y_1| \ge R}\!\rmd y\,H(x,y,t) \nonumber \\ & \quad - \nu \int_{|x_1| \ge R}\! \rmd x \,  W_{R,h}''(x_1)  \, \omega(x,t),
\end{align}
From the definition \eqref{H}, we can now estimate the right-hand side in the last equation by means of Eqs.~\eqref{W2}, \eqref{W1}, and using the definition Eq.~\eqref{mt}. We thus obtain
\begin{align}
\label{d_tmu1}
\frac{\rmd}{\rmd t} \mu_t(R,h) & \le \frac{3C_W}{2h^2} m_t(R) \|\omega_0\|_\infty \sup_{|x_1|>R} \int_S\!\rmd y\, \left|\frac{\partial G}{\partial x_2}(x,y)\right|  \, |x_1-y_1| \nonumber \\ & \quad + \nu \frac{C_W}{h^2} m_t(R).
\end{align}
From the explicit expression of $\frac{\partial G}{\partial x_2}$ in Eq.~\eqref{G_2}, one now deduces that, for suitable positive constants $C_1, C_2$, the following estimate holds:
\begin{equation}
\label{boundG}
\left|\frac{\partial G}{\partial x_2}\right| \le \frac{C_1}{|x-y|} \exp(-C_2 |x-y|).
\end{equation}
Hence, we can estimate the right-hand side in Eq.~\eqref{d_tmu1} by using Eq.~\eqref{boundG}, getting
\begin{equation}
\label{eq:mut}
\frac{\rmd}{\rmd t} \mu_t(R,h) \le \frac{C}{h^2} m_t(R).
\end{equation}
Therefore, by Eq.~\eqref{2mass 3},
\begin{equation}
\label{eq_it}
\mu_t (R,h) \le \mu_0 (R,h) +\frac{C}{h^2} \int_0^t\! \rmd s\, \mu_s (R-h, h) \quad \forall\, t>0.
\end{equation}
We are now in force to prove Eqs.~\eqref{smt} and \eqref{smt2}.

\smallskip
\textit{(a)} We assume $t$ sufficiently large and iterate inequality Eq.~\eqref{eq_it} $n=\lfloor\log t \rfloor$ times\footnote{$\lfloor a\rfloor$ denotes the integer part of $a>0$}, starting from $R_0 = \sqrt{t \log^\alpha t}$ down to $R_n = \frac 12 \sqrt{t \log^\alpha t}$, where $R_n = R_0 - n h$, and therefore $h = (2n)^{-1} \sqrt{t \log^\alpha t}$. Hence,
\[
\begin{split}
\mu_t(R_0-h,h) & \le \mu_0(R_0-h,h) + \sum_{j=1}^{n-1} \mu_0(R_{j+1},h) \,\frac{1}{j!}\left(\frac{C}{h^2} t\right)^j \\ & \quad +\frac{1}{(n-1)!} \left(\frac{C}{h^2} \right)^n \int_0^t\!{\textnormal{d}} s\,  (t-s)^{n-1}\mu_s(R_{n},h) \quad \forall\, t\gg 1.
\end{split}
\]
By assumption Eq.~\eqref{Lamb0}, we may take $t$ large enough so that $\mu_0(R_j,h)=0$ for all $j=0,\ldots,n$. Hence,
\begin{equation}
\label{mass 15'}
\mu_t(R_0-h,h) \le \frac{1}{(n-1)!} \left(\frac{C}{h^2} \right)^{n} \int_0^t\!\rmd s\, (t-s)^{n-1} \mu_s(R_{n},h) \le \frac{1}{n!} \left(\frac{C}{h^2} t \right)^n,
\end{equation}
where in the last inequality we have used the trivial bound $\mu_s(R_{n},h) \le M_0$, see Eq.~\eqref{in}. Therefore, using also Eq.~\eqref{2mass 3}, Stirling’s formula, and $n=\lfloor\log t\rfloor$, we obtain
\[
m_t(R_0) \le \mu_t(R_0 -h,h) \le  \left( \frac{C  t \log^2 t}{t \log^{1+\alpha}t}\right)^{\log t},
\]
which yields Eq.~\eqref{smt} upon choosing $\mc T_{\alpha,\ell}$ sufficiently large.

\smallskip
\textit{(b)} As before, we assume $t$ sufficiently large and iterate inequality Eq.~\eqref{eq_it} $n=\lfloor t^\delta \rfloor$ times, from $R_0 = t^\beta$ down to $R_n = \frac{1}{2} t^\beta$, where $R_n = R_0 - n h$, and hence $h = (2n)^{-1} t^\beta$. Repeating the same argument as above (details omitted), we obtain
\[
m_t(R_0)\le \left(  \frac{C t}{t^{2(\beta-\delta)}t^\delta} \right)^{t^\delta}
=\left(  \frac{C }{t^{2\beta-\delta-1}} \right)^{t^\delta},
\]
from which Eq.~\eqref{smt2} follows upon choosing $\mc T_{\beta,\ell}$ sufficiently large.
\end{proof}

We remark that in \cite{Mar94} a result analogous to item (b) of Theorem~\ref{teo:mt} was obtained by using higher moments, whereas here we rely on an iterative argument together with property Eq.~\eqref{antisim}, which allows us to establish both items (a) and (b) of Theorem~\ref{teo:mt}.

\section{Euler flow}
\label{sec:3}

In the absence of viscosity the vorticity is transported (see Eq.~\eqref{eq:omega} for $\nu=0$), so that
\begin{equation}
\label{Cons1}
\omega(x(x_0,t),t) = \omega_0(x_0),
\end{equation}
where $x(x_0,t)$ is the trajectory of the fluid particle initially at $x_0$, i.e.,
\begin{equation}
\label{Cons2}
\dot x(x_0,t) = u(x(x_0,t),t), \quad x(x_0,0) = x_0,
\end{equation}
$\dot x(x_0,t) := \frac{\partial}{\partial t} x(x_0,t)$. As in the previous section, we assume that $\omega_0$ is non‑negative, bounded, and satisfies Eq.~\eqref{Lamb0}. Clearly, as $\omega(x,t)$ is simply transported, Eq.~\eqref{in} also holds in the present setting. Regarding the velocity field, following \cite{CD19}, we choose
\begin{equation}
\label{ueuler}
u(x,t) = \int_S\! \rmd y\, \nabla_x^\perp G(x,y)\,\omega(y,t),
\end{equation}
which, under our assumption of bounded and compactly supported vorticity, yields the following behavior at infinity,
\[
\lim_{x_1 \to \pm \infty} u_1(x,t) = 0, \qquad \lim_{x_1\to - \infty} u_2(x,t) = - \lim_{x_1\to + \infty} u_2(x,t).
\]

In this case, we obtain the following nontrivial bound on the growth in time of the diameter of the support: for any $\alpha>1$, it does not exceed $C(t \log^\alpha t)^{1/3}$ for $t \gg 1$. This is proved by means of the iterative method already used in the previous section, here enforced by incorporating the concentration property Eq.~\eqref{boundx1}, see Lemma \ref{lem:mt} below. In particular, this approach shows that the vorticity mass lying farther than $C(t \log^\alpha t)^{1/3}$ from the origin (around which the initial vorticity is supported) is extremely small. We then conclude by showing that the velocity field generated by this negligible mass cannot push vortex filaments that have reached such distances any further away. We are now in a position to state our result.

\begin{theorem}
\label{th_d}
Let $\omega(x, t)$ be the solution to Eqs.~\eqref{Cons1}, \eqref{Cons2}, and \eqref{ueuler} with $\omega_0$ as stipulated above. Define
\[
d_\omega (t) := \sup_{x, y \in \Lambda(t)} |x_1-y_1|,
\]
where $\Lambda(t) :=  \supp\, \omega (\cdot,t)$. Then
\begin{equation}
\label{size}
\lim_{t\to \infty} (t\log^\alpha t)^{-1/3} d_\omega (t) = 0 \quad \forall\,\alpha >1.
\end{equation}
\end{theorem}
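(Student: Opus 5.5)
The plan has two parts, following the strategy sketched before the statement. First, prove a mass-confinement estimate for the Euler flow, namely Lemma~\ref{lem:mt}, using the iteration of Section~\ref{sec:2} with $\nu=0$. Second, turn it into a bound on the support by estimating the horizontal velocity felt by a particle far from the bulk.

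\emph{Step 1 (mass bound).} With $\nu=0$, the derivation of Eq.~\eqref{eq_it} goes through verbatim. I will not bound $m_t(R)$ crudely in Eq.~\eqref{eq:mut}. Instead I feed in the concentration property Eq.~\eqref{boundx1}, which gives the Chebyshev bound $m_t(R)\le C/R$ uniformly in $t$. I then iterate Eq.~\eqref{eq_it} $n$ times from $R_0=R$ down to $R_n=R/2$, with $h=R/(2n)$, and use $\mu_s(R_n,h)\le m_s(R/2)\le C/R$ in the last step instead of $M_0$. This yields
\[
m_t(R)\le \frac{C}{R}\,\frac{1}{n!}\Big(\frac{Ct\,n^2}{R^2}\Big)^n\le \frac{C}{R}\Big(\frac{C t n}{R^2}\Big)^n .
\]
Choosing $R=(t\log^\alpha t)^{1/3}$ alone is not enough, since $tn/R^2$ is not small. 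So I instead use the derivative bound Eq.~\eqref{eq:mut} on a time window $[t-\tau,t]$ only. Alternatively, and more simply, I interpret the lemma as providing smallness at scale $R^3\sim t\log^\alpha t$. In that version, I redo the iteration with $\mu_s(R_n,h)$ bounded via Eq.~\eqref{boundx1} and the sharper factor coming from $|x_1-y_1|\le h$ weights. What I aim to conclude is a statement of the form $m_s(R(t))\le t^{-\ell}$ for all $s\le t$ and every $\ell$, where $R(t)=\varepsilon(t\log^\alpha t)^{1/3}$. The $1/3$ exponent should come from balancing the velocity decay in Step 2 against the $1/R$ mass tail, rather than from the iteration alone.

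\emph{Step 2 (support).} Fix a particle with $x_1(t)>0$ that at some time exceeds $2R(t)$. Split $u_1(x)=\int \partial_{x_2}G(x,y)\,\omega(y)\,dy$ into the part with $|y_1|\le R$ and the part with $|y_1|>R$. For the near part, the $x_2$-average of $\partial_{x_2}G$ vanishes, and by Eq.~\eqref{G_2} the kernel decays like $e^{-|x_1-y_1|}$. So at distance $\ge R$ it contributes at most $CM_0e^{-R}$, which is negligible. For the far part, Eq.~\eqref{boundG} and $\|\omega\|_\infty$ give the standard bound $|u_1|\le C\sqrt{m_t(R)}$, or $C m_t(R)^{1/2}\log$. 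Hence $\dot x_1\le C t^{-\ell}$ while the particle is beyond $R$. Over times up to $t$, the particle can then move only $O(t^{1-\ell})$ beyond the threshold, and this is $o(R(t))$. Since $R$ increases with time, a continuity argument gives $\sup_{\Lambda(t)}|x_1|\le 2R(t)+o(R(t))$. Letting $\varepsilon\to0$, which is allowed because $\alpha>1$ is arbitrary, gives Eq.~\eqref{size}.

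\emph{Main obstacle.} The crux is Step 1: obtaining super-polynomial smallness of $m_t$ at the scale $(t\log^\alpha t)^{1/3}$ rather than at $\sqrt{t\log^\alpha t}$. The pure iteration only produces the $\sqrt t$ scale. My first attempt will be to improve Eq.~\eqref{d_tmu1} using the first moment. Near $|x_1|\ge R$, the pair interaction $H$ is supported where $|x_1-y_1|\lesssim1$ up to exponential errors. So the mass flux through $R$ is controlled by $h^{-2}$ times the mass in a unit layer times something small. Combining this with $m_t(R)\le C/R$ as an extra factor in each iteration should give a factor $(Ct/(h^2R))^n$, which is small precisely when $R^3\gg t\log^{\alpha}t$ with $n\sim\log t$.
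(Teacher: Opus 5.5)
Your Step~1, the mass bound that produces the exponent $1/3$, is not established, and most of what you wrote there does not work.

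\begin{itemize}
\item Inserting $m_s(R/2)\le C/R$ only in the last iteration step gains a single factor $1/R$, as you noticed yourself.
\item The time-window variant fails because nothing makes $\mu_{t-\tau}(R,h)$ small.
\item The ``sharper factor from $|x_1-y_1|\le h$ weights'' produces no power of $1/R$.
\item The claim that the $1/3$ comes from balancing the $1/R$ tail against the velocity decay is wrong. With only $m_s(R)\le C/R$, the rearrangement bound gives a far-field velocity $\lesssim R^{-1/2}$, hence a displacement $\lesssim tR^{-1/2}$ over $[0,t]$. Balancing this with $R$ yields $R\sim t^{2/3}$.
\end{itemize}

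Step~2 needs super-polynomial smallness of the mass at scale $(t\log^\alpha t)^{1/3}$, and that has to come from the iteration itself. Your last paragraph names the right mechanism, one factor $1/R$ per iteration step, but only asserts it. The ``unit layer times something small'' heuristic also leaves unspecified what the small factor is. What you must prove is
\[
\frac{\rmd}{\rmd t}\mu_t(R,h)\le \frac{C}{h^2}\Big(\rme^{-C_2R/2}+\frac1R\Big)\,m_t(R).
\]
To get it, split the $y$-integral in \eqref{d_tmu} at $|y_1|=R/2$. By \eqref{W1} and \eqref{boundG}, the integrand is bounded by $\frac{C}{h^2}\rme^{-C_2|x-y|}\,\omega(x,t)\,\omega(y,t)$, with $|x_1|\ge R$.
\begin{itemize}
\item For $|y_1|\le R/2$ one has $|x-y|\ge|x_1-y_1|\ge R/2$, which gives the exponential term.
\item For $|y_1|>R/2$, bound the exponential by $1$ and use $m_t(R/2)\le \frac2R\sup_s\int|y_1|\,\omega(y,s)\,\rmd y\le C/R$, which follows from \eqref{boundx1}.
\end{itemize}
Then iterate with $h=R_0/(2n)$ and $n=\lfloor\log t\rfloor$. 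Since $R_j\ge R_0/2$, every step carries a factor $C/R_0$. For $R_0=(t\log^\alpha t)^{1/3}$ this gives $m_t(R_0)\le \frac{M_0}{n!}\big(Ctn^2/R_0^3\big)^n\le (C\log^{1-\alpha}t)^{\lfloor\log t\rfloor}$, which is the paper's Lemma~\ref{lem:mt}.

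Granted that lemma, your Step~2 is correct and is a genuinely different, simpler route than the paper's. The paper argues by contradiction, using the extremal-particle estimate \eqref{stimv}, an ODE barrier $\rho(t)$, and a case split according to the size of $t_0$. You instead fix the final time $t$ and a single threshold $R=\varepsilon(t\log^\alpha t)^{1/3}$.

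The point you should state explicitly is that the iterated bound with $(R_0,h,n)$ fixed is increasing in $s$. Hence $m_s(R)\le t^{-\ell}$ holds for \emph{all} $s\le t$. Then any particle with $x_1>2R$ at a time $s\le t$ satisfies $|u_1|\le CR^{-1}\rme^{-C_2R}+Ct^{-\ell/2}$, with no extremality needed. A last-exit-time argument then gives $R_t\le 2R+o(1)$ once $\ell>2$.

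Two minor fixes:
\begin{itemize}
\item The far-field bound is $Ct^{-\ell/2}$, because of the square root, and it holds beyond $2R$, not beyond $R$.
\item Sending $\varepsilon\to0$ is legitimate, since a fixed $\varepsilon$ only turns $C$ into $C\varepsilon^{-3}$ in the iteration (equivalently, work with some $\alpha'\in(1,\alpha)$).
\end{itemize}
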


\subsection{Proof of Theorem \ref{th_d}}
\label{sec:3.1}

The proof is organized into two preliminary lemmata and the proof of Eq.~\eqref{size}. The first lemma provides the key estimates on the horizontal velocity of fluid particles which are furthest from the origin.

\begin{lemma}
\label{lem:3}
Define
\begin{equation}
\label{Rt}
R_t := \max\{|x_1| \colon x\in \Lambda(t) \}.
\end{equation}
Given $x_0\in\Lambda(0)$, let $x(x_0,t)$ be the solution to Eq.~\eqref{Cons2} and suppose at time $t$ it happens that
\begin{equation}
\label{hstimv}
|x_1(x_0,t)| = R_t>0.
\end{equation}
Then, at this time $t$,
\begin{equation}
\label{stimv}
|\dot x_1(x_0,t)| \le \frac{2C_1\rme^{-C_2 R_t/2}}{R_t} + C \sqrt{m_t(R_t/2)},
\end{equation}
with $C_1,C_2$ as in Eq.~\eqref{boundG} and $m_t(\cdot)$ as in Eq.~\eqref{mt}.
\end{lemma}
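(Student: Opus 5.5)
We write $x = x(x_0,t)$ and use that $\dot x_1 = u_1(x,t) = \int_S \rmd y\, \frac{\partial G}{\partial x_2}(x,y)\,\omega(y,t)$, by Eq.~\eqref{ueuler}. We split the integration domain according to whether the source point is far from or close to $x$ in the horizontal direction:
\[
A_1 := \{y \in S \colon |y_1| \le R_t/2\}, \qquad A_2 := \{y\in S \colon |y_1| > R_t/2\}.
\]
The contribution of $A_1$ will give the first term in Eq.~\eqref{stimv}, and the contribution of $A_2$ the second.

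\emph{Contribution of $A_1$.} Since $|x_1| = R_t$ by Eq.~\eqref{hstimv}, every $y\in A_1$ satisfies $|x-y| \ge |x_1-y_1| \ge R_t/2$. By Eq.~\eqref{boundG}, on $A_1$ we therefore have
\[
\Big|\frac{\partial G}{\partial x_2}(x,y)\Big| \le \frac{2C_1}{R_t}\,\rme^{-C_2R_t/2},
\]
using that $r\mapsto r^{-1}\rme^{-C_2 r}$ is decreasing. Integrating against $\omega(y,t)$ gives at most $M_0$ times this bound. The constant in Eq.~\eqref{stimv} has no $M_0$; to match it exactly, one can take $M_0$ to be absorbed into $C_1$, or equivalently assume a normalization $M_0\le 1$. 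I would simply note this absorption.

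\emph{Contribution of $A_2$.} Here we use that the kernel is locally integrable in the plane but not square integrable, so we split once more according to the distance $|x-y|$. Set $m := m_t(R_t/2)$, which by Eq.~\eqref{mt} is exactly $\int_{A_2}\omega(y,t)\,\rmd y$. For $r>0$ to be chosen, the part of $A_2$ with $|x-y|\le r$ is bounded using $\omega\le\|\omega_0\|_\infty$ and $|\partial_{x_2}G|\le C_1/|x-y|$ (from Eq.~\eqref{boundG}):
\[
\int_{|x-y|\le r}\frac{C_1}{|x-y|}\,\|\omega_0\|_\infty\,\rmd y \le 2\pi C_1\|\omega_0\|_\infty\, r.
\]
The part of $A_2$ with $|x-y|> r$ is bounded by $(C_1/r)\,m$. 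Choosing $r=\sqrt m$ (more precisely, $r = \sqrt{m/\|\omega_0\|_\infty}$) balances the two terms and gives $C\sqrt{m}$. Periodicity in $x_2$ causes no trouble: Eq.~\eqref{boundG} holds for the representative with $|x_2-y_2|\le\pi$, and the disc $|x-y|\le r$ lies in a single period when $r\le \pi$. When $r>\pi$, $m$ is bounded below, and the bound is trivial since $|u_1|\le C\le C\sqrt m$.

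The only delicate point is this optimization step. One could instead try to bound the whole $A_2$ integral by $\sup|\partial_{x_2}G|\cdot m$, but this fails because the kernel is singular. The rearrangement-type balance above, which uses the $L^\infty$ bound on $\omega$, is what converts the smallness of $m$ into the $\sqrt m$ factor. Adding the two contributions gives Eq.~\eqref{stimv}.
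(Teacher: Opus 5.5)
Your proof is correct and follows the paper's argument: the same split into $|y_1|\le R_t/2$, where the kernel is exponentially small, and $|y_1|>R_t/2$, where the mass is $m_t(R_t/2)$ and the density is bounded by $\|\omega_0\|_\infty$. Your near/far split at $r=\sqrt{m/\|\omega_0\|_\infty}$ is just an explicit form of the paper's rearrangement bound $H_2\le 2\pi C_1\varrho$ with $\pi\varrho^2\|\omega_0\|_\infty=m_t(R_t/2)$, and your remark that the first term really carries a factor $M_0$ is accurate, since the paper's bound on $H_1$ silently drops the same factor.
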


\begin{proof}
Letting $x=x(x_0,t)$, by Eqs.~\eqref{Cons1}, \eqref{Cons2},  \eqref{ueuler}, and \eqref{boundG},
\begin{equation}
\label{eq:u1}
|\dot x_1(x_0,t)| \le |u_1(x,t)| \le \int_{\Lambda(t)}\! \rmd y\, \frac{C_1}{|x-y|} \exp(-C_2|x-y|)\, \omega(y,t).
\end{equation}
We split the integration region into two parts, the set $A_1= \{y\in \Lambda(t) \colon |y_1 |\le R_t/2\}$ and the set $A_2 = \{y\in \Lambda(t) \colon  R_t/2 < |y_1| \le R_t\}$. Then,
\begin{equation}
\label{distance1}
|\dot x_1(x_0,t)| \le H_1 + H_2,
\end{equation}
where
\begin{equation}
\label{in A_1}
H_1 = \int_{A_1}\! \rmd y\, \frac{C_1}{|x-y|} \exp(-C_2|x-y|)\, \omega(y,t)
\end{equation}
and
\begin{equation}
\label{in A_2}
H_2 = \int_{A_2}\! \rmd y\, \frac{C_1}{|x-y|} \exp(-C_2|x-y|)\, \omega(y,t).
\end{equation}
To estimate $H_1$, we notice that, in view of Eq.~\eqref{hstimv}, $|x-y|\ge|x_1-y_1|\ge R_t/2$, therefore
\begin{equation}
\label{in H_11}
H_1 \le \frac{2C_1}{R_t} \rme^{-C_2 R_t/2}.
\end{equation}
On the other hand,
\[
H_2 \le C_1 \int_{A_2}\! \rmd y\, \frac 1{|x-y|} \, \omega(y,t)
\]
and we notice that the function $|x-y|^{-1}$ diverges monotonically as $y\to x$, so that the maximum of the integral in the right-hand side is achieved when we rearrange the vorticity mass as close as possible to the singularity. By the bound on $\omega$ in Eq.~\eqref{in} and since, by Eq.~\eqref{mt}, $m_t(R_t/2)$ is equal to the total amount of vorticity in $A_2$, this rearrangement gives
\begin{equation}
\label{h2}
H_2 \le C_1 \int_{|y| \le \varrho}\!\rmd y\, \frac{1}{|y|} = 2\pi C_1 \varrho,
\end{equation}
where the radius $\varrho$ is such that $\pi \varrho^2\|\omega_0\|_\infty = m_t(R_t/2)$. The estimate Eq.~\eqref{stimv} now follows from Eqs.~\eqref{distance1}, \eqref{in H_11}, and \eqref{h2}.
\end{proof}

\begin{lemma}
\label{lem:mt}
Under the assumptions of Theorem \ref{th_d} and letting $m_t$ as in Eq.~\eqref{mt}, for each $\ell>0$ and $\alpha>1$ there exists $\mc T_{\alpha, \ell}$ such that
\begin{equation}
\label{smte}
m_t \big( (t \log^\alpha t)^{1/3} \big)  \le \frac{1}{t^\ell} \quad \forall\, t\ge \mc T_{\alpha,\ell}.
\end{equation}
\end{lemma}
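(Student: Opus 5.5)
The plan is to rerun the iterative scheme from the proof of Theorem~\ref{teo:mt} with $\nu=0$. The uniform first-moment bound \eqref{boundx1} enters in one place: it upgrades the growth rate of $\mu_t(R,h)$ by an extra factor $1/R$. That factor is exactly what changes the confinement scale from $\sqrt{t}$ to $t^{1/3}$.

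\textbf{Step 1: a sharper differential inequality.} Start from \eqref{d_tmu} with $\nu=0$. The right-hand side is a sum of integrals of $H$ over regions where $|x_1|\ge R$. Split the $y$-integration according to whether $|y_1|\le R/2$ or $|y_1|>R/2$.
\begin{itemize}
\item When $|y_1|\le R/2$ we have $|x-y|\ge R/2$, so \eqref{boundG} makes the kernel exponentially small. Bounding $|W'_{R,h}(x_1)-W'_{R,h}(y_1)|\le C/h$ crudely gives a contribution at most $C h^{-1}\rme^{-C_2R/2} M_0\, m_t(R)$.
\item When $|y_1|>R/2$, use \eqref{W1}: the factor $|x_1-y_1|$ cancels the singularity in \eqref{boundG}, leaving a bounded kernel. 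This contribution is at most $C h^{-2} m_t(R)\, m_t(R/2)$.
\end{itemize}
By \eqref{boundx1} and Chebyshev's inequality, $m_t(R/2)\le C/R$ uniformly in $t$. Hence
\[
\frac{\rmd}{\rmd t}\mu_t(R,h)\le \Big(\frac{C}{h^2R}+\frac{C}{h}\rme^{-C_2R/2}\Big)\, m_t(R) \le \frac{C}{h^2R}\, m_t(R),
\]
where the last step holds for $R$ large, which is the regime of interest. This is the analogue of \eqref{eq:mut}.

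\textbf{Step 2: iteration.} Integrate and use \eqref{2mass 3} to get the analogue of \eqref{eq_it}, with $C/h^2$ replaced by $C/(h^2 R)$. Iterate it $n=\lfloor\log t\rfloor$ times, from $R_0=(t\log^\alpha t)^{1/3}$ down to $R_n=R_0/2$, with $h=R_0/(2n)$. All radii $R_j$ lie in $[R_0/2,R_0]$, so every step carries the common constant $C/(h^2R_0)$. Since $\mu_0(R_j,h)=0$ for $t$ large, \eqref{Lamb0} and \eqref{in} give, exactly as in \eqref{mass 15'},
\[
m_t(R_0)\le \frac{1}{n!}\Big(\frac{C t}{h^2R_0}\Big)^n .
\]
By Stirling's formula this is at most $\big(Ct n/(h^2R_0 n)\big)^{n}\cdot$const, which, using $h^2R_0 = R_0^3/(4n^2)$, becomes
\[
\Big(\frac{C t\, n}{R_0^3}\Big)^n=\Big(\frac{C\log t}{\log^\alpha t}\Big)^{\lfloor\log t\rfloor}.
\]
Since $\alpha>1$, the base tends to zero. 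So this bound is eventually smaller than $t^{-\ell}$ for every $\ell$, which proves \eqref{smte}.

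\textbf{Main obstacle.} The delicate point is Step 1: controlling the $|y_1|>R/2$ piece by the product $m_t(R)\,m_t(R/2)$ rather than by $m_t(R)$ alone. This requires the cancellation in \eqref{W1} to neutralize the $1/|x-y|$ singularity, so that the kernel becomes uniformly bounded. It also requires checking that the symmetrization in \eqref{d_tmu} leaves no term in which $y$ ranges over all of $S$ without an exponentially small kernel. The exponential remainder then only needs $R\gg\log h$, which is trivially true at the scale $R\sim t^{1/3}$.
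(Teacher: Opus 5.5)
Your proposal is correct and follows the paper's proof essentially step for step. The shared steps are the split of the $y$-integration at $|y_1|=R/2$, the use of \eqref{W1} to neutralize the kernel singularity on the far piece, and the first-moment bound \eqref{boundx1}; your Chebyshev step $m_t(R/2)\le C/R$ is exactly the paper's ``multiply and divide by $|y_1|$.'' Together these give $\frac{\rmd}{\rmd t}\mu_t(R,h)\le \frac{C}{Rh^2}\,m_t(R)$, followed by the same $\lfloor\log t\rfloor$-step iteration from $R_0=(t\log^\alpha t)^{1/3}$. The remaining differences are cosmetic: you bound the near piece via $|W'_{R,h}|\le C/h$ rather than \eqref{W1}, and you make explicit two points the paper leaves implicit, namely the absorption of the exponentially small term and the uniformity of the constant $C/(h^2R_j)$ over $R_j\in[R_0/2,R_0]$.
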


\begin{proof}
By arguing as in the proof of Theorem \ref{teo:mt}, we arrive at Eq.~\eqref{d_tmu} without the last term in the right-hand side (as $\nu =0$), i.e., 
\[
\frac{\rmd}{\rmd t} \mu_t(R,h) =2 \int_{|x_1| \ge R}\!\rmd x \! \int_S\!\rmd y\,H(x,y,t)  -  \int_{|x_1| \ge R}\!\rmd x \! \int_{|y_1| \ge R}\!\rmd y\,H(x,y,t),
\]
with $H(x,y,t)$ as in Eq.~\eqref{H}. To exploit the additional concentration property Eq.~\eqref{boundx1}, we further decompose the right-hand side in the above identity, getting
\[
\frac{\rmd}{\rmd t} \mu_t(R,h) = A_1 + A_2 + A_3,
\]
with
\[
\begin{split}
A_1 & = 2 \int_{|x_1| > R}\!\rmd x \! \int_{|y_1| \le R/2}\!\rmd y\,H(x,y,t), \\ A_2 & = 2 \int_{|x_1| > R}\!\rmd x \! \int_{|y_1| > R/2}\!\rmd y\, H(x,y,t), \\ A_3 & = -  \int_{|x_1| > R}\! \rmd x \! \int_{|y_1| > R}\!\rmd y \, H(x,y,t).
\end{split}
\]
The term $A_1$ becomes negligible for large $R$ due to the decay of the kernel Eq.~\eqref{boundG}; more precisely, using also Eq.~\eqref{W1}, we get
\[
\begin{split}
|A_1| & \le \frac{C_W}{h^2} m_t(R) \sup_{|x_1|>R} \int_{|y_1| \le R/2}\!\rmd y\, \Big|\frac{\partial G_2}{\partial x_2}(x,y)\Big|\, |x_1-y_1| \, \omega(y,t) \\ & \le \frac{C_W}{h^2} m_t(R) \sup_{|x_1|>R} \int_{|y_1| \le R/2}\!\rmd y\, C_1\exp(-C_2 |x-y|)\, \omega(y,t) \\ & \le \frac{C}{h^2} \exp(-C_2 R/2)m_t(R).
\end{split}
\]
To estimate $A_2$ and $A_3$, we first apply Eq.~\eqref{W1} and then we multiply and divide by $|y_1|$, gaining an extra factor $1/R$,
\[
\begin{split}
|A_2| + |A_3| & \le 6 \frac{C_W}{Rh^2} m_t(R) \int_{|y_1| > R/2}\!\rmd y\, \Big|\frac{\partial G_2}{\partial x_2}(x,y)\Big| \, |x_1-y_1| \, |y_1|\, \omega(y,t) \\ & \le 6C_1 \frac{C_W}{Rh^2} m_t(R) \int\!\rmd y\,  |y_1| \, \omega(y,t) \le \frac{C}{Rh^2},
\end{split}
\]
where we used Eq.~\eqref{boundx1} in the inequality. Therefore,
\[
\frac{\rmd}{\rmd t} \mu_t(R,h) \le \frac{C}{R h^2} m_t(R),
\]
whence
\[
\mu_t (R,h) \le \mu_0 (R,h) +\frac{C}{Rh^2} \int_0^t\! \rmd s\, \mu_s (R-h, h) \quad \forall\, t>0.
\]
Analogously to what done in the proof of item (a) of Theorem~\ref{teo:mt}, we assume $t$ sufficiently large and iterate the above inequality $n=\lfloor\log t \rfloor$ times, starting from $R_0 =  (t \log^\alpha t)^{1/3}$ down to $R_n = \frac 12  (t \log^\alpha t)^{1/3}$, where $R_n = R_0 - n h$, and therefore $h = (2n)^{-1} (t \log^\alpha t)^{1/3}$. Following the same steps as in the proof of Theorem~\ref{teo:mt} (details omitted) leads again to the estimate
\[
m_t(R_0) \le  \left( \frac{C  t \log^2 t}{t \log^{1+\alpha}t}\right)^{\log t},
\]
which yields Eq.~\eqref{smte}  upon choosing $\mc T_{\alpha,\ell}$ sufficiently large.
\end{proof}

\begin{proof}[Proof of Eq.~\eqref{size}]
From the definition Eq.~\eqref{Rt}, it is enough to prove that
\begin{equation}
\label{size1}
\lim_{t\to \infty} (t\log^\alpha t)^{-1/3} R_t = 0 \quad \forall\,\alpha >1.
\end{equation}
To this end, we proceed by contradiction and assume that there exist $\alpha>1$, $Q>0$, and a sequence of positive times $\{T_j\}_{j\in \bb N}$, diverging to infinity as $j\to\infty$, such that
\begin{equation}
\label{eq:abs}
R_{T_j} \ge 3Q \big[(1+T_j) \log^\alpha (2+T_j) \big]^{1/3} \quad \forall\, j\in \bb N.
\end{equation}

We recall that, in view of definition Eq.~\eqref{Rt} and Lemma \ref{lem:3}, for any $x_0\in \Lambda(0)$ and $t>0$ we have $|x_1(x_0,t)| \le R_t$, and whenever $|x_1(x_0,t)|=R_t$ the differential inequality Eq.~\eqref{stimv} holds. We claim that this implies 
\begin{equation} 
\label{suppRt} 
\Lambda (t)  \subset \{x \in S \colon |x_1|< \rho(t) \} \quad \forall\, t \in [s_0,s_1]  \quad \forall\, [s_0,s_1]  \subset [0,\infty),  
\end{equation}
where $\rho(t)$ solves 
\begin{equation} 
\label{eqdiffRt} 
\dot{\rho}(t) = \frac{4C_1\rme^{-C_2 \rho(t)/2}}{\rho(t)} + g(t),
\end{equation}
with initial datum $\rho(s_0) > R_{s_0}$ and $g(t)$ any smooth function which is an upper bound for the last term in Eq.~\eqref{stimv}. Indeed, $|x_1| < \rho(s_0)$ for any $x\in \Lambda(s_0)$, and by contradiction, if there were a first time $t_* \in (s_0,s_1]$ such that $x_1(x_0,t_*) = \rho(t_*)$ [resp.~$x_1(x_0,t_*) = -\rho(t_*)$] for some $x_0 \in \Lambda(0)$, then necessarily $\rho(t_*) = R_{t_*}$. Hence, by Eqs.~\eqref{stimv} and \eqref{eqdiffRt}, $\dot{\rho}(t_*)$ would be strictly larger than $\dot{x}_1(x_0,t_*)$ [resp. $-\dot{\rho}(t_*)$ strictly smaller than $\dot{x}_1(x_0,t_*)$], contradicting the fact that $t_*$ is the first time at which the graph of $t \mapsto x_1(x_0,t)$ crosses that of $t \mapsto \rho(t)$ [resp. $t \mapsto -\rho(t)$].

Fix now $\alpha' \in (1, \alpha)$ with $\alpha$ as in Eq.~\eqref{eq:abs} and, recalling Eqs.~\eqref{Lamb0} and \eqref{smte}, assume $T_j$  (i.e., $j \in \bb N$) so large that
\begin{equation}
\label{eq:condj}
\bar C < 2\big[T_j \log^{\alpha'}T_j\big]^{1/3} < Q \big[(1+T_j) \log^\alpha (2+T_j)\big]^{1/3} , \quad T_j^{1/4} > \mc T_{\alpha',4}
\end{equation}
Let then
\[
t_0 = \sup\big\{t >0 \colon R_t < Q \big[(1+T_j) \log^\alpha (2+T_j)\big]^{1/3}\big\}.
\]
From Eq.~\eqref{eq:abs} and the first inequality in Eq.~\eqref{eq:condj}, $t_0\in (0, T_j)$. We next analyze separately the cases $t_0 \in (0,T_j^{1/4})$ and $t_0 \in [T_j^{1/4},T_j)$, showing that in both cases we get an absurd as soon as $T_j$ is sufficiently large.

(i) If $t_0 \in (0,T_j^{1/4})$, from Eq.~\eqref{eq:u1} and the same rearrangement argument used to prove Eq.~\eqref{h2} we deduce
\[
|u_1(x,t)| \le \int_{\Lambda(t)}\! \rmd y\, \frac{C_1}{|x-y|}\, \omega(y,t) \le C,
\]
so that, by the assumption Eq.~\eqref{Lamb0} and the definition of $t_0$, 
\[
Q \big[(1+T_j) \log^\alpha (2+T_j)\big]^{1/3} - \bar C \le R_{t_0} - R_0 \le Ct_0  \le C T_j^{1/4},
\]
which cannot be true for $T_j$ large.

(ii) If $t_0 \in [T_j^{1/4},T_j)$, let $\rho(t)$ be as in Eq.~\eqref{eqdiffRt}, with $[s_0,s_1] = [t_0,T_j]$ and
\[
\rho(t_0) = 2Q \big[(1+T_j) \log^\alpha (2+T_j)\big]^{1/3}, \quad g(t) \le \frac{C}{t^2} \quad \forall\, t\in [t_0,T_j],
\]
with $g(t)$ chosen appropriately. By the definition of $t_0$ and the second inequality in Eq.~\eqref{eq:condj}, it follows that $\rho(t_0) > R_{t_0}$ and $R_t \ge 2\big[T_j \log^{\alpha'}T_j\big]^{1/3} $ for any $t\in [t_0,T_j]$. Therefore, by the third inequality in Eq.~\eqref{eq:condj} and Eq.~\eqref{smte}, the last term in Eq.~\eqref{stimv} is bounded by $C/t^2$ for any $t\in [t_0,T_j]$. This guarantees that $g(t)$ can be chosen satisfying also the requirement of being an upper bound to the last term in Eq.~\eqref{stimv}. As a consequence, Eq.~\eqref{suppRt} holds true for any $t\in [t_0,T_j]$. In particular, $\rho(t) \ge R_t \ge Q  \big[(1+T_j) \log^\alpha (2+T_j)\big]^{1/3}$ for any $t\in [t_0,T_j]$, from which we deduce that, using also Eqs.~\eqref{eq:abs} and \eqref{eqdiffRt},
\[
\begin{split}
& Q  \big[(1+T_j) \log^\alpha (2+T_j)\big]^{1/3} \le R_{T_j} - \rho(t_0) \le \rho(T_j) - \rho(t_0) \nonumber \\ & = \int_{t_0}^{T_j}\!\rmd t\, \left[ \frac{4C_1\rme^{-\frac 12C_2\rho(t)}}{\rho(t)} + g(t) \right] \le  \frac{4C_1\rme^{- \frac 12C_2 Q  \big[(1+T_j) \log^\alpha (2+T_j)\big]^{1/3}}}{Q  \big[(1+T_j) \log^\alpha (2+T_j)\big]^{1/3}}T_j + \frac{C}{T_j^{1/4}},
\end{split}
\]
which again leads to a contradiction for $T_j$ large enough.
\end{proof}

\subsection*{Statements and declarations}

\begin{itemize}
\item \textbf{Funding} No funding was received for conducting this study.
\item \textbf{Conflict of interest} The authors have no competing interests to declare that are relevant to the content of this article.
\item \textbf{Data Availability Statement} Data sharing not applicable to this article as no datasets were generated or analysed during the current study.
\end{itemize}

\subsection*{Acknowledgments}
Work performed under the auspices of GNFM-INDAM and the Italian Ministry of the University (MUR).

\end{document}